\newtheorem{theorem}{Theorem}[section]
\newtheorem{lemma}[theorem]{Lemma}
\newtheorem{question}[theorem]{Question}
\newtheorem{remark}[theorem]{Remark}
\newtheorem{fact}[theorem]{Fact}
\numberwithin{equation}{section}
\newcommand{\NN}{\mathbb{N}}
\newcommand{\ZZ}{\mathbb{Z}}
\newcommand{\TTT}{\mathcal{T}}
\newcommand{\IR}{\mathbb{R}}
\newcommand{\Ss}{\mathbb{S}}
\newcommand{\Ff}{\mathfrak{F}}
\newcommand{\e}{\varepsilon}
\newcommand{\cl}{\mathrm{cl}}
\renewcommand{\phi}{\varphi}
\newcommand{\U}{\mathcal U}
\newcommand{\supp}{\mathrm{supp}}
\title[On the Mackey problem for free abelian topological groups]{On the Mackey problem for \\ free abelian topological groups}
\author[S.~Gabriyelyan]{Saak Gabriyelyan}
\address{Department of Mathematics, Ben-Gurion University of the
Negev, Beer-Sheva, P.O. 653, Israel}
\email{saak@math.bgu.ac.il}
\subjclass[2000]{Primary 46A03; Secondary 54H11}
\keywords{free locally convex space, Mackey topology}
\begin{document}

\begin{abstract}
Recently  Au\ss enhofer  and the author independently have shown that  the free abelian topological group $A(\mathbf{s})$ over a convergent sequence $\mathbf{s}$  does not admit the strongest compatible locally quasi-convex group topology that gives the first example of a locally quasi-convex abelian group without a Mackey group topology. In this note we considerably extend this example by showing that the free abelian topological group $A(X)$ over a non-discrete zero-dimensional metrizable space $X$ does not have a Mackey group topology. In particular, for every countable non-discrete metrizable space $X$, the group $A(X)$ does not have a Mackey group topology.
\end{abstract}

\maketitle


\section{Introduction}


Let $(E,\tau)$ be a locally convex space. A locally convex vector topology $\nu$ on $E$ is called {\em compatible with $\tau$} if the spaces $(E,\tau)$ and $(E,\nu)$ have the same topological dual space. The classical  Mackey--Arens theorem states that for every  locally convex space $(E,\tau)$ there exists a finest locally convex vector space topology $\mu$ on $E$ compatible with $\tau$. The topology $\mu$ is called the {\em Mackey topology} on $E$ associated with $\tau$, and if $\mu=\tau$, the space $E$ is called a {\em Mackey space}.

An analogous notion in the class of locally quasi-convex (lqc for short) abelian groups was introduced in \cite{CMPT}. For an abelian topological group $(G,\tau)$ we denote by $\widehat{G}$ the group of all continuous characters of $(G,\tau)$ (for all relevant definitions see the next section). Two group topologies  $\mu$ and $\nu$ on an abelian group $G$  are said to be {\em compatible } if $\widehat{(G,\mu)}=\widehat{(G,\nu)}$. Following \cite{CMPT},  an lqc  abelian group $(G,\mu)$ is called a {\em Mackey group} if for every lqc group topology $\nu$ on $G$ compatible with $\tau$  it follows that $\nu\leq\mu$. In this case the topology $\mu$ is called a {\em Mackey group topology} on $G$. An lqc abelian group $(G,\tau)$ is called a  {\em pre-Mackey group} 
if there is a Mackey group topology $\mu$ on $G$ compatible with $\tau$. Note that not  every Mackey locally convex space is a Mackey group. Indeed, answering a question posed in \cite{DMPT}, we proved in \cite{Gab-Mackey} that there are even metrizable locally convex spaces which are not Mackey groups.   We show in \cite{Gab-Cp} that the space $C_p(X)$, which is a Mackey space for every Tychonoff space $X$, is a Mackey group if and only every functionally bounded subset of $X$ is finite.  For historical remarks, references and open questions we referee the reader to \cite{Gab-Mackey,MarPei-Tar}.

A weaker notion than the notion of a Mackey group was introduced in \cite{Gab-Mackey}. Let $(G,\tau)$ be an lqc abelian group. A locally quasi-convex group topology $\mu$ on $G$ is called {\em quasi-Mackey} if $\mu$ is compatible with $\tau$ and there is no locally quasi-convex group topology  $\nu$ on $G$ compatible with $\tau$ such that $\mu<\nu$. The group $(G,\tau)$  is {\em quasi-Mackey} if $\tau$ is a quasi-Mackey group topology. Proposition 2.6 of \cite{Gab-Mackey} states that every locally quasi-convex abelian group has quasi-Mackey group topologies.

One of the most important classes of lqc groups is the class of free abelian topological groups. Following \cite{Mar}, an abelian topological group $A(X)$ is called {\em  the free abelian topological  group} over a Tychonoff space $X$ if $A(X)$ satisfies the following conditions:
\begin{enumerate}
\item[{\rm ($\alpha$)}] $X$ is  a subspace of $A(X)$;
\item[{\rm ($\beta$)}] any continuous map $f$ from $X$ into any abelian topological group $H$ extends uniquely to a continuous homomorphism ${\bar f}: A(X) \to H$.
\end{enumerate}
For every Tychonoff space $X$, the free abelian topological  group $A(X)$ exists, is unique up to isomorphism of abelian topological groups, see \cite{Mar}. Further, $A(X)$ is algebraically the free abelian group $\ZZ(X)$ on $X$.

The Mackey--Arens theorem suggests the following general  question posed in \cite{CMPT}: {\em Is every locally quasi-convex  abelian group a pre-Mackey  group}? Being motivated by this question we ask in Questions 4.2 and 4.4 of \cite{Gab-Mackey}:
\begin{question}
For which Tychonoff spaces $X$ the free abelian group $A(X)$ is Mackey? Is it true that $A(\mathbf{s})$, where $\mathbf{s}$ is a convergent sequence, is a Mackey group?
\end{question}

Only very recently,  Au\ss enhofer \cite{Aus3} and the author \cite{Gabr-A(s)-Mackey} independently have proved the following theorem.
\begin{theorem}[\cite{Aus3,Gabr-A(s)-Mackey}] \label{t:A(s)-Mackey}
The group $A(\mathbf{s})$ is not a pre-Mackey group.
\end{theorem}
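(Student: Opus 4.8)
The plan is to establish the stronger assertion that the family of all locally quasi-convex group topologies on the underlying abelian group $\ZZ(\mathbf{s})$ of $A(\mathbf{s})$ which are compatible with the free topology $\FF$ has no greatest element; since the Mackey topology, when it exists, is by definition such a greatest element, this shows $A(\mathbf{s})$ is not pre-Mackey. Because an intersection of quasi-convex subsets of an abelian group is again quasi-convex, the supremum of any family of locally quasi-convex group topologies is again locally quasi-convex, so it is enough to produce two locally quasi-convex group topologies $\nu_1,\nu_2$ on $\ZZ(\mathbf{s})$, \emph{both} compatible with $\FF$, whose supremum $\nu_1\vee\nu_2$ is \emph{not} compatible with $\FF$. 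Indeed, if $\mu$ were a Mackey group topology compatible with $\FF$, then $\nu_1\le\mu$ and $\nu_2\le\mu$ by maximality, hence $\nu_1\vee\nu_2\le\mu$ and
\[
\widehat{(\ZZ(\mathbf{s}),\nu_1\vee\nu_2)}\subseteq\widehat{(\ZZ(\mathbf{s}),\mu)}=\widehat{A(\mathbf{s})},
\]
contradicting the choice of $\nu_1,\nu_2$.

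Next I would record the concrete form of the relevant dual pair. Writing $\mathbf{s}=\{s_0\}\cup\{s_n:n\ge 1\}$ with $s_0=0$ and $s_n\to 0$, the universal property of $A(\mathbf{s})$ applied to $H=\TT$ identifies $\widehat{A(\mathbf{s})}$ with the group of convergent $\TT$-valued sequences, the duality being $\la\sum_n m_n[s_n],f\ra=\sum_n m_n f(s_n)\in\TT$. The character that will become $(\nu_1\vee\nu_2)$-continuous while failing to be $\FF$-continuous is one attached to a \emph{non}-convergent sequence of values, bounded away from $0$; the cleanest candidate is the diagonal character $\chi$ with $\chi(s_n)=\tfrac12$ for every $n\ge 1$.

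The heart of the proof is the construction of $\nu_1$ and $\nu_2$. Fix a partition of $\{1,2,\dots\}$ into consecutive finite intervals $I_1<I_2<\cdots$ and write $\chi=\sum_k\chi_k$, where $\chi_k$ is the character with $\chi_k(s_n)=\tfrac12$ for $n\in I_k$ and $\chi_k(s_n)=0$ otherwise; each $\chi_k$ lies in $\widehat{A(\mathbf{s})}$ because $I_k$ is finite. I would let $\nu_1$ be the locally quasi-convex group topology generated by the $\FF$-neighbourhoods of $0$ together with suitable quasi-convex sets controlling the tails $\sum_{k\ \mathrm{odd},\ k\ge N}\chi_k$, and $\nu_2$ the analogous topology built from the even-numbered blocks, choosing the lengths $|I_k|$ and the sizes of the added quasi-convex sets so that: (i) each of $\nu_1$ and $\nu_2$ separately remains compatible with $\FF$, i.e. the enlargement is too weak to create \emph{any} new continuous character --- in particular it must not make the partial diagonal $\sum_{k\ \mathrm{odd}}\chi_k$ (resp.\ $\sum_{k\ \mathrm{even}}\chi_k$) continuous; while (ii) a $\nu_1$-neighbourhood of $0$ and a $\nu_2$-neighbourhood of $0$ can be found whose intersection is contained in $\ker\chi$ --- the odd blocks being controlled by the first, the even blocks by the second, and the finitely many remaining coordinates by an ordinary $\FF$-neighbourhood --- so that $\chi$ is $(\nu_1\vee\nu_2)$-continuous. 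Conditions (i) and (ii) together yield $\chi\in\widehat{(\ZZ(\mathbf{s}),\nu_1\vee\nu_2)}\setminus\widehat{A(\mathbf{s})}$, completing the argument.

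The main obstacle will be exactly the tension between (i) and (ii): the quasi-convex enlargements defining $\nu_1$ and $\nu_2$ must be large enough that their join traps the single new character $\chi$, yet small enough that neither $\nu_1$ nor $\nu_2$ traps anything new on its own --- in particular not the partial diagonals over odd or even blocks, which are themselves non-convergent, hence forbidden, and which the most naive choice (for instance simply declaring $\bigcap_{k\ \mathrm{odd},\ k\ge N}\ker\chi_k$ open) makes continuous at once. Striking this balance forces a careful quantitative matching of the block lengths $|I_k|$ against the radii of the quasi-convex hulls employed, and it is here that the convergent-sequence structure of $\mathbf{s}$ --- equivalently, the precise shape of a neighbourhood basis at $0$ for $\FF$ --- must be genuinely exploited rather than merely invoked.
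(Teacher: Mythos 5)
Your outer reduction is sound and is exactly the paper's frame: $A(\mathbf{s})$ fails to be pre-Mackey once one exhibits two locally quasi-convex group topologies on $\ZZ(\mathbf{s})$, each compatible with the free topology, whose supremum is not compatible (this is one direction of Proposition 3.11 of \cite{CMPT}, quoted here as Theorem \ref{t:Char-Mackey}). Your target character is also the right one: the ``diagonal'' character taking the value $-1$ (your $\tfrac12$) at every $s_n$ is precisely the homomorphism $F_0(\chi)=a^{\sum_i\chi(g_i)}$ with $a=-1$ that the paper makes continuous for the supremum in Claim 2 of the proof of Theorem \ref{t:Mackey-space-A(X)-1}. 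But the heart of the matter --- actually producing the two compatible topologies --- is missing. You describe $\nu_1,\nu_2$ only as ``generated by the $\FF$-neighbourhoods together with suitable quasi-convex sets controlling the tails'' of odd and even block characters, with block lengths and radii ``to be chosen'', and you yourself name the unresolved tension: any enlargement strong enough to trap $\chi$ in the join threatens to trap the (forbidden, non-convergent) partial diagonals already in $\nu_1$ or $\nu_2$ alone. No mechanism is offered for escaping this, and the naive choices you mention do fail. So as it stands the proposal proves nothing beyond the easy reduction; the compatibility of the refined topologies is exactly where all the work lies.

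For comparison, the paper's mechanism avoids your odd/even splitting entirely and is the genuinely new idea your sketch lacks. Fix an element $z\in\Ss$ of \emph{infinite} order and pull back the product topology of $A(X)\times\Ff^X(\Ss)$ along $T_z(\chi)=\big(\chi,(z^{\chi(g_i)})_i\big)$, where the $g_i$ are characteristic functions of disjoint clopen sets covering $X\setminus\{x_\infty\}$ (for $\mathbf{s}$: the singletons $\{s_n\}$). A basic $\TTT_z$-neighbourhood thus imposes the conditions $z^{\chi(g_i)}\in V$ for all $i$; because $z$ has infinite order, the admissible exponents $L=\{l:z^l\in V\}$ do not sit inside a proper subgroup, and Lemma \ref{l:lemma-F0-Mackey} plus a compactness argument show no new character becomes continuous, i.e.\ each $\TTT_z$ is compatible (Claim 1 --- this is the delicate part, occupying most of the proof). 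New characters appear only when one joins $\TTT_z$ with $\TTT_{az}$ for $a$ of finite order $r$: then $a^{\chi(g_i)}\in V\cdot V$ forces $\chi(g_i)\in r\ZZ$, and $F_0(\chi)=a^{\sum_i\chi(g_i)}$ is $\TTT_z\vee\TTT_{az}$-continuous but discontinuous on $A(X)$. In short, the two compatible topologies differ by a \emph{rotation parameter} ($z$ versus $az$), not by which blocks they control; it is the torsion quotient $a=(az)z^{-1}$ that manufactures the congruence condition in the join without either topology containing it separately. Unless you can supply a quantitative construction resolving the tension you flagged, your route remains a plan rather than a proof, and I would recommend adopting (or reinventing) the $T_z$-embedding trick.
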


This somewhat surprising example of an lqc group without a Mackey group topology motivates us to consider some natural {\em classes} of spaces $X$ for which  the group $A(X)$  may not have a Mackey group topology. Noting that $\mathbf{s}$ is a zero-dimensional  metrizable space one can ask whether $A(X)$ is not a pre-Mackey group for every non-discrete zero-dimensional  metrizable  space $X$. Essentially using ideas from \cite{Gabr-A(s)-Mackey,Gabr-L(X)-Mackey},  we answer this question in the affirmative that significantly generalizes Theorem \ref{t:A(s)-Mackey}.
\begin{theorem} \label{t:Mackey-space-A(X)}
If $X$ is a non-discrete zero-dimensional  metrizable  space, then $A(X)$ is neither a pre-Mackey group nor a quasi-Mackey group.
\end{theorem}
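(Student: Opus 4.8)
The plan is to reduce the statement to the already-settled case $X=\mathbf{s}$ (Theorem~\ref{t:A(s)-Mackey}). The key geometric input is that every non-discrete zero-dimensional metrizable space retracts onto a convergent sequence. To see this, fix a compatible metric $d$ and a non-isolated point $x_0$, choose $x_n\to x_0$, and, passing to a subsequence, assume the $x_n$ are distinct and $d(x_n,x_0)$ strictly decreases to $0$; using zero-dimensionality, pick pairwise disjoint clopen sets $U_n\ni x_n$ with $x_0\notin U_n$ and $\diam(U_n)\to 0$. The map $\rho\colon X\to S:=\{x_0\}\cup\{x_n:n\in\NN\}$ collapsing each $U_n$ to $x_n$ and everything else to $x_0$ is continuous (the preimage of $\{x_n\}$ is $U_n$, and the preimage of a basic neighbourhood of $x_0$ in $S$ is $X$ with finitely many $U_n$ deleted), it restricts to the identity on $S$, and $S$ with the subspace topology is homeomorphic to $\mathbf{s}$. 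So $\mathbf{s}$ is a retract of $X$. (Zero-dimensionality is essential here; for instance no such retraction exists when $X=[0,1]$.)

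Next I would apply the functor $A(\cdot)$ to the inclusion $S\hookrightarrow X$ and to $\rho$, obtaining continuous homomorphisms $j\colon A(\mathbf{s})\to A(X)$ and $q\colon A(X)\to A(\mathbf{s})$ with $q\circ j=\mathrm{id}$. A continuous retraction of abelian topological groups splits topologically: $g\mapsto(q(g),\,g-j(q(g)))$ is a topological isomorphism of $A(X)$ onto $A(\mathbf{s})\times K$, where $K:=\ker q$ carries the subspace topology $\tau_K$; under it the free topology of $A(X)$ corresponds to $\tau_{\mathbf{s}}\times\tau_K$, where $\tau_{\mathbf{s}}$ is the free topology of $A(\mathbf{s})$. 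Recalling that $\widehat{A(Y)}\cong C(Y,\TT)$ for every Tychonoff $Y$ (immediate from property~($\beta$) with $H=\TT$) and that characters of a finite topological product are pairs of characters of the factors, I get $\widehat{A(X)}\cong C(\mathbf{s},\TT)\times\widehat{K}$.

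Now the transfer. For every locally quasi-convex group topology $\nu$ on $A(\mathbf{s})$ compatible with $\tau_{\mathbf{s}}$, the product topology $\nu\times\tau_K$ on $A(X)\cong A(\mathbf{s})\times K$ is locally quasi-convex and compatible with the free topology of $A(X)$, because $\widehat{(A(X),\nu\times\tau_K)}=\widehat{(A(\mathbf{s}),\nu)}\times\widehat{K}=C(\mathbf{s},\TT)\times\widehat{K}=\widehat{A(X)}$. The constructions behind Theorem~\ref{t:A(s)-Mackey} in \cite{Aus3,Gabr-A(s)-Mackey} yield such a $\nu$ with $\nu>\tau_{\mathbf{s}}$; then $\nu\times\tau_K$ is compatible, locally quasi-convex, and strictly finer than the free topology of $A(X)$ (projecting to the first coordinate, $\nu\times\tau_K=\tau_{\mathbf{s}}\times\tau_K$ would force $\nu=\tau_{\mathbf{s}}$), so $A(X)$ is not quasi-Mackey. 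For the pre-Mackey part, let $\mu_{\mathbf{s}}$ be the supremum of all locally quasi-convex topologies on $A(\mathbf{s})$ compatible with $\tau_{\mathbf{s}}$; Theorem~\ref{t:A(s)-Mackey} is precisely the assertion that $\mu_{\mathbf{s}}$ is not compatible, so there is a character $\chi\in\widehat{(A(\mathbf{s}),\mu_{\mathbf{s}})}$ not belonging to $C(\mathbf{s},\TT)$, that is, whose restriction to the subspace $\mathbf{s}$ is discontinuous. The supremum of the family $\{\nu\times\tau_K:\nu\text{ compatible with }\tau_{\mathbf{s}}\}$ equals $\mu_{\mathbf{s}}\times\tau_K$, and the character $(a,k)\mapsto\chi(a)$ of $(A(X),\mu_{\mathbf{s}}\times\tau_K)$ restricts, on the copy of $\mathbf{s}$ sitting inside $X$, to that discontinuous map, hence lies outside $\widehat{A(X)}=C(X,\TT)$. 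Consequently, if $\eta$ were a Mackey group topology on $A(X)$, each $\nu\times\tau_K$ would be $\le\eta$, so $\mu_{\mathbf{s}}\times\tau_K\le\eta$ and $\widehat{(A(X),\eta)}$ would contain this extra character, contradicting the compatibility of $\eta$. Thus $A(X)$ is not pre-Mackey.

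The steps are essentially modular, so the place where I expect genuine work rather than a citation is making precise what the case $X=\mathbf{s}$ supplies. Theorem~\ref{t:A(s)-Mackey} as stated is equivalent to non-compatibility of $\mu_{\mathbf{s}}$, which is exactly what drives the pre-Mackey part; but the quasi-Mackey part needs in addition a \emph{single} compatible locally quasi-convex topology on $A(\mathbf{s})$ strictly finer than $\tau_{\mathbf{s}}$, and this does not follow formally from non-compatibility of $\mu_{\mathbf{s}}$ — it has to be extracted from the arguments of \cite{Aus3,Gabr-A(s)-Mackey}, or re-derived along the lines of \cite{Gabr-A(s)-Mackey,Gabr-L(X)-Mackey}. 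The remaining verifications — the point-set construction in the first paragraph, the splitting lemma in the second, and the character computation in the third — are routine.
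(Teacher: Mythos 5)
Your proposal is essentially sound and takes a genuinely different route from the paper. You reduce to the known case by retracting $X$ onto a convergent sequence $S\cong\mathbf{s}$ (this is where zero-dimensionality and metrizability enter; the retraction $\rho$ and the clopen sets $U_n$ are constructed correctly), split $A(X)$ topologically as $A(\mathbf{s})\times K$ with $K=\ker q$ (legitimate, since the retraction makes $j$ an embedding and the map $g\mapsto(q(g),g-jq(g))$ a topological isomorphism), and transfer compatible locally quasi-convex topologies by forming products with $\tau_K$. The pre-Mackey half really does follow from Theorem \ref{t:A(s)-Mackey} used as a black box: failure of pre-Mackeyness of $A(\mathbf{s})$ is equivalent to non-compatibility of the supremum $\mu_{\mathbf{s}}$, and your character $\chi\circ\mathrm{pr}_1$, being continuous for $\mu_{\mathbf{s}}\times\tau_K$ but discontinuous on the copy of $\mathbf{s}$ inside $X$, correctly rules out a Mackey topology on $A(X)$. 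The paper argues quite differently: it proves the more general Theorem \ref{t:Mackey-space-A(X)-1} directly, building for each $z\in\Ss$ of infinite order a topology $\TTT_z$ on $\ZZ(X)$ pulled back from $A(X)\times\Ff^X(\Ss)$ via the characteristic functions $g_i$, showing each $\TTT_z$ is compatible while $\TTT_z\vee\TTT_{az}$ is not, and concluding via Theorem \ref{t:Char-Mackey}; Theorem \ref{t:Mackey-space-A(X)} then follows by taking $U_n=V_n\setminus V_{n+1}$ for a clopen base $(V_n)$ at a non-isolated point. What the paper's route buys is self-containedness and extra generality: Theorem \ref{t:Mackey-space-A(X)-1} also covers non-metrizable spaces with no nontrivial convergent sequences (the Lindel\"{o}f example in the Remark admits no retraction onto $\mathbf{s}$, so your reduction cannot reach it). What your route buys is modularity and a short derivation of the pre-Mackey half from the already published case.

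The one genuine weak point is the one you flag yourself: the quasi-Mackey half needs a single locally quasi-convex topology on $A(\mathbf{s})$ compatible with $\tau_{\mathbf{s}}$ and strictly finer than it, and this does not follow formally from the statement of Theorem \ref{t:A(s)-Mackey} (non-pre-Mackeyness only yields a compatible $\nu\not\le\tau_{\mathbf{s}}$, and $\nu\vee\tau_{\mathbf{s}}$ need not be compatible -- that is exactly the phenomenon in play; nor does maximality of $\tau_{\mathbf{s}}$ contradict non-pre-Mackeyness in the abstract). So as written that half rests on extracting the topologies $\TTT_z>\tau_{\mathbf{s}}$ from the proofs in \cite{Aus3,Gabr-A(s)-Mackey}, which is precisely the content of Claims 1 and 3 of the paper's Theorem \ref{t:Mackey-space-A(X)-1} specialized to $X=\mathbf{s}$. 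The ingredient is true and extractable, but to make your argument complete you must either quote it as such or re-derive it, at which point you have reproduced the core of the paper's construction for the special case; once that is granted, your transfer $\nu\mapsto\nu\times\tau_K$ finishes both halves correctly.
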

In particular, for every non-discrete countable  metrizable  space $X$, the group $A(X)$ does not have a Mackey group topology.


\section{Proof of Theorem \ref{t:Mackey-space-A(X)}}


Set $\NN:=\{ 1,2,\dots\}$. For a subset $A$ of an abelian group $G$, set $(1)A:=A$ and $(n+1)A:= (n)A+A$ for $n\in\NN$. If $g$ is an element of $G$, we denote by $\langle g\rangle$ the subgroup of $G$ generated by $g$.
Denote by $\mathbb{S}$ the unit circle multiplicative group  and set $\Ss_+ :=\{z\in  \Ss:\ {\rm Re}(z)\geq 0\}$. For every $n\in\NN$, set $\Ss_n:=\big\{ e^{2\pi i \phi}\in \Ss: \phi\in (-\frac{1}{4n}, \frac{1}{4n})\big\}$.

Recall that a topological space $X$ is called {\em zero-dimensional} if $X$ is a nonempty $T_1$-space and has a base consisting of open-and-closed sets. Every zero-dimensional space is Tychonoff, and every countable Tychonoff space is zero-dimensional by Corollary 6.2.8 of \cite{Eng}.

Let $G$ be an abelian topological group.  A character $\chi\in \widehat{G}$  is a continuous homomorphism from $G$ into $\mathbb{S}$.
A subset $A$ of $G$ is called {\em quasi-convex} if for every $g\in G\setminus A$ there exists   $\chi\in \widehat{G}$ such that $\chi(x)\notin \Ss_+$ and $\chi(A)\subseteq \Ss_+$.
An abelian topological group $G$ is called {\em locally quasi-convex} if it admits a neighborhood base at the neutral element $0$ consisting of quasi-convex sets. It is well known that the class of locally quasi-convex abelian groups is closed under taking products and subgroups.

For every Tychonoff space $X$, the group $A(X)$ is a subgroup of the free locally convex space $L(X)$ over $X$ by \cite{Tkac,Usp2}, and hence $A(X)$ is a locally quasi-convex group. If $g(x)$ is a continuous function from $X$ to $\IR$ and $\chi=a_1 x_1 +\cdots + a_n x_n \in A(X)$, set
\[
\chi(g) := a_1 g(x_1) +\cdots + a_n g(x_n ).
\]

Let $X$ be a set and let $G$ be an abelian topological group. Then the sets of the form $V^X$, where $V$ is an open neighborhood of $0\in G$, form a base at zero of the group topology $\mathfrak{u}_X$ on the direct  product $G^X$. The topology $\mathfrak{u}_X$ is called the {\em uniform topology} on $G^X$. Set $\Ff^X(G):= \big( G^X, \mathfrak{u}_X\big)$. It is easy to see that if $G$ is locally quasi-convex and $V$ is a quasi-convex neighborhood of $0\in G$, then $V^X$ is a quasi-convex neighborhood of zero in $\Ff^X(G)$ and hence $\Ff^X(G)$ is also a locally quasi-convex group.

Let $X$ be a Tychonoff space. Denote by $C(X,\Ss)$ the space of all continuous functions from $X$ to $\Ss$. Since $X$ is a closed subspace of $A(X)$ (see \cite{Mar}) and algebraically generates $A(X)$, the following well-known fact is an immediate corollary of the definition of $A(X)$.
\begin{fact} \label{p:dual-A(X)}
The restriction map $R: \widehat{A(X)} \to C(X,\Ss), F\mapsto F|_X$, is an algebraic isomorphism.
\end{fact}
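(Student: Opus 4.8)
The plan is to verify directly, from conditions ($\alpha$) and ($\beta$) in the definition of $A(X)$, that $R$ is a well-defined isomorphism of the underlying abstract groups, checking in turn: well-definedness, the homomorphism property, injectivity, and surjectivity. For well-definedness, note that by ($\alpha$) the space $X$ is a subspace of $A(X)$, so for any $F\in\widehat{A(X)}$ the restriction $F|_X\colon X\to\Ss$ is the composition of the inclusion $X\hookrightarrow A(X)$ with the continuous map $F$, hence continuous; thus $F|_X\in C(X,\Ss)$. Since the group operation on both $\widehat{A(X)}$ and $C(X,\Ss)$ is pointwise multiplication of $\Ss$-valued functions, $(F_1F_2)|_X=(F_1|_X)(F_2|_X)$, so $R$ is a group homomorphism.

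For injectivity I would use that $A(X)$ is algebraically generated by $X$ (indeed $A(X)=\ZZ(X)$ algebraically): if $F\in\widehat{A(X)}$ satisfies $R(F)=F|_X\equiv 1$, then the homomorphism $F$ vanishes on a generating set, hence $F\equiv 1$, so $\ker R$ is trivial. For surjectivity I would apply property ($\beta$) with target group $H=\Ss$: given $f\in C(X,\Ss)$, i.e.\ a continuous map $f\colon X\to\Ss$ into the abelian topological group $\Ss$, property ($\beta$) furnishes a (unique) continuous homomorphism $\bar f\colon A(X)\to\Ss$ with $\bar f|_X=f$; then $\bar f\in\widehat{A(X)}$ and $R(\bar f)=f$. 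Combining these, $R$ is a bijective group homomorphism, i.e.\ an algebraic isomorphism.

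There is essentially no obstacle here: the statement is precisely the assertion that the universal property of $A(X)$ computes its Pontryagin dual. The only point worth spelling out is that the homomorphism produced by ($\beta$) takes values in $\Ss$ and is continuous, so it is genuinely a member of $\widehat{A(X)}$; everything else is formal, which is why the fact is labelled an immediate corollary of the definition.
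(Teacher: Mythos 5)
Your proof is correct and follows exactly the route the paper indicates: the paper treats the Fact as an immediate corollary of the definition of $A(X)$ (continuity of $F|_X$ from $X\subseteq A(X)$, injectivity because $X$ generates $A(X)$ algebraically, surjectivity from property ($\beta$) applied with $H=\Ss$). Your write-up simply makes these standard steps explicit.
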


We need the following lemma.
\begin{lemma}[\cite{Gabr-A(s)-Mackey}] \label{l:lemma-F0-Mackey}
Let $z,w\in\Ss$ and let $z$ have infinite order. Let $V$ be a neighborhood of $1$ in $\Ss$. If $w^l=1$ for every $l\in\NN$ such that $z^l\in V$, then $w=1$.
\end{lemma}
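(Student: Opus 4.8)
The plan is to show that the hypothesis forces $w$ to be a root of unity of order $1$.

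First I would record the classical fact that, since $z$ has infinite order, the cyclic group $\langle z\rangle$ is dense in $\Ss$: its closure is a closed subgroup of $\Ss$, which is infinite because it contains an element of infinite order, and the only infinite closed subgroup of $\Ss$ is $\Ss$ itself. Moreover the semigroup of positive powers $\{z^{l}:l\in\NN\}$ is already dense in $\Ss$ --- its closure is a compact subsemigroup of the compact group $\Ss$, hence a subgroup, and then the previous remark applies. In particular the set $\{l\in\NN:z^{l}\in V\}$ is nonempty, so by hypothesis $w^{l_{0}}=1$ for some $l_{0}\in\NN$; thus $w$ is a root of unity, of order $m$ say.

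Next I would suppose toward a contradiction that $m\geq 2$. The point is to work not with all powers of $z$ but with the coset $z\langle z^{m}\rangle$. Since $z^{m}$ again has infinite order, the set $\{z^{mk}:k\in\NN\}$ is dense in $\Ss$ by the previous paragraph, and hence so is its translate $\{z^{1+mk}:k\in\NN\}$; in particular this set meets the neighborhood $V$. Pick $k\in\NN$ with $z^{1+mk}\in V$ and set $l:=1+mk\in\NN$. By hypothesis $w^{l}=1$, while on the other hand $w^{l}=w\cdot(w^{m})^{k}=w\neq 1$ because $m\geq 2$. This contradiction shows $m=1$, i.e.\ $w=1$.

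The only mildly delicate point is the passage from density of $\langle z\rangle$ in $\Ss$ to density of the set of positive powers $\{z^{l}:l\in\NN\}$ (and likewise for $z^{m}$), which is what the formulation of the lemma with $l$ ranging over $\NN$ requires. This can be proved by hand --- a convergent subsequence $z^{l_{k}}\to c$ with $l_{1}<l_{2}<\cdots$ yields $z^{l_{k+1}-l_{k}}\to 1$ with positive exponents, which already gives the nonemptiness used above, and a short iteration upgrades it to density --- or one may simply invoke Weyl's equidistribution theorem for the irrational rotation $z=e^{2\pi i\alpha}$. Alternatively the issue can be sidestepped: shrink $V$ to a symmetric arc $\{e^{2\pi i\theta}:|\theta|<\delta\}$ about $1$, use only that the closure of $\langle z^{m}\rangle$ is all of $\Ss$ to find $j\in\ZZ$ with $z^{1+mj}\in V$, and if $1+mj\leq 0$ replace it by $l:=-(1+mj)=m|j|-1\in\NN$; then $z^{l}\in V$ by symmetry, $m\nmid l$ since $m\geq 2$, and $w^{l}=w^{-1}\neq 1$, again a contradiction.
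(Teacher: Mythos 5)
Your argument is correct. Note that this paper does not actually prove the lemma --- it is quoted from \cite{Gabr-A(s)-Mackey} --- so there is no in-paper proof to compare against; judged on its own, your proof is complete and self-contained. The two key points both check out: (a) the set of \emph{positive} powers $\{z^{l}:l\in\NN\}$ is dense in $\Ss$, which you justify correctly either via the standard fact that a closed subsemigroup of a compact group is a subgroup (so the closure of the positive powers is a closed infinite subgroup, hence all of $\Ss$), or by the elementary subsequence argument; and (b) the coset trick: once some $l_{0}\in\NN$ with $z^{l_{0}}\in V$ forces $w$ to be a root of unity of order $m$, assuming $m\geq 2$ and applying density of $\{z^{1+mk}:k\in\NN\}$ (a translate of the dense set $\{z^{mk}:k\in\NN\}$, legitimate since $z^{m}$ again has infinite order) produces $l\equiv 1 \pmod m$ with $z^{l}\in V$, whence $w=w^{l}=1$, a contradiction. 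The alternative route via a symmetric arc and $j\in\ZZ$ is also fine; the only point worth making explicit there is that $1+mj=0$ cannot occur when $m\geq 2$, so the replacement $l=-(1+mj)$ really lies in $\NN$ and satisfies $l\equiv -1\pmod m$, giving $w^{l}=w^{-1}\neq 1$ as you say. In short: correct, with all the delicate points (positivity of exponents, nontriviality of $w^{l}$ along the chosen progression) properly addressed.
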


In the proof of Theorem \ref{t:Mackey-space-A(X)-1} below we use the following result, see Proposition 3.11 of \cite{CMPT} or Theorem 2.7 of  \cite{Gab-Mackey}.
\begin{theorem}[\cite{CMPT,Gab-Mackey}] \label{t:Char-Mackey}
For a locally quasi-convex abelian group $(G,\tau)$ the following assertions are equivalent:
\begin{enumerate}
\item[{\rm (i)}] the group  $(G,\tau)$ is  pre-Mackey;
\item[{\rm (ii)}] $\tau_1\vee\tau_2$ is compatible with $\tau$ for every locally quasi-convex group topologies $\tau_1$ and $\tau_2$ on $G$ compatible with $\tau$.
\end{enumerate}
\end{theorem}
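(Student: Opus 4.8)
The plan is to reduce Theorem~\ref{t:Char-Mackey} to one structural observation and then to read off both implications from it. The observation is: for any family $\{\tau_i\}_{i\in I}$ of lqc group topologies on an abelian group $G$, the topology $\sup_{i\in I}\tau_i$ is again an lqc group topology, and every $\chi\in\widehat{(G,\sup_i\tau_i)}$ already belongs to $\widehat{(G,\sup_{i\in F}\tau_i)}$ for some finite $F\subseteq I$. For the first assertion, the diagonal map $G\to\prod_{i\in I}(G,\tau_i)$ is a topological isomorphism of $(G,\sup_i\tau_i)$ onto the diagonal subgroup equipped with the subspace topology, and the class of lqc groups is closed under products and subgroups. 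For the second assertion, $\sup_i\tau_i$ is the initial topology with respect to the identity maps into the $(G,\tau_i)$, so a basic neighbourhood of $0$ has the form $\bigcap_{i\in F}U_i$ with $F$ finite and $U_i$ a $\tau_i$-neighbourhood of $0$; applying continuity of $\chi$ to a fixed neighbourhood $V$ of $1$ in $\Ss$ gives the claim.

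For (i)$\Rightarrow$(ii) I would fix a Mackey group topology $\mu$ on $G$ compatible with $\tau$ and lqc group topologies $\tau_1,\tau_2$ on $G$ compatible with $\tau$. Maximality of $\mu$ forces $\tau_1\le\mu$ and $\tau_2\le\mu$, hence $\tau\le\tau_1\le\tau_1\vee\tau_2\le\mu$, and $\tau_1\vee\tau_2$ is lqc by the observation. Applying the inclusion-reversing passage to dual groups along this chain, together with $\widehat{(G,\tau)}=\widehat{(G,\tau_1)}=\widehat{(G,\mu)}$, forces all the intervening dual groups to coincide; in particular $\tau_1\vee\tau_2$ is compatible with $\tau$.

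For (ii)$\Rightarrow$(i) I would let $\mu$ be the supremum of the (nonempty, as it contains $\tau$) family of all lqc group topologies on $G$ compatible with $\tau$; by the observation $\mu$ is lqc. The crucial step is to show $\mu$ is compatible with $\tau$, and only the inclusion $\widehat{(G,\mu)}\subseteq\widehat{(G,\tau)}$ is at issue. Given $\chi\in\widehat{(G,\mu)}$, the observation supplies finitely many compatible lqc topologies $\tau_1,\dots,\tau_n$ with $\chi\in\widehat{(G,\tau_1\vee\dots\vee\tau_n)}$; an induction on $n$ — at each stage $\tau_1\vee\dots\vee\tau_k$ is lqc by the observation, compatible with $\tau$ by the inductive hypothesis, hence admissible as an input to (ii) alongside $\tau_{k+1}$ — shows $\tau_1\vee\dots\vee\tau_n$ is compatible with $\tau$, so $\chi\in\widehat{(G,\tau)}$. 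Thus $\mu$ is compatible with $\tau$; since $\mu$ dominates every lqc group topology compatible with $\tau$, and since any lqc group topology compatible with $\mu$ is compatible with $\tau$ (as $\widehat{(G,\mu)}=\widehat{(G,\tau)}$), the group $(G,\mu)$ is a Mackey group, so $(G,\tau)$ is pre-Mackey.

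The one step I expect to demand genuine care is the structural observation — that local quasi-convexity survives the supremum of group topologies. This is exactly where the product–subgroup stability of the lqc class is used, and one has to verify that the diagonal embedding is a homeomorphism onto its image carrying precisely the topology $\sup_i\tau_i$, not merely an algebraic isomorphism. Once that is in hand, the order-reversal of the dual-group assignment and the factorisation of a continuous character on a supremum through a finite sub-supremum are routine, and the induction in (ii)$\Rightarrow$(i) is pure bookkeeping.
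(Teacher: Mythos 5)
The paper contains no proof of Theorem \ref{t:Char-Mackey} to compare against: it is quoted from \cite{CMPT} (Proposition 3.11) and \cite{Gab-Mackey} (Theorem 2.7). Your argument is essentially the standard one from those sources and is correct: the supremum of lqc group topologies is lqc via the diagonal embedding into the product; (i)$\Rightarrow$(ii) follows by sandwiching $\tau_1\le\tau_1\vee\tau_2\le\mu$ and comparing dual groups; (ii)$\Rightarrow$(i) takes $\mu$ to be the supremum of all compatible lqc topologies, factors each $\mu$-continuous character through a finite sub-supremum, and uses induction with (ii) to see that finite suprema of compatible topologies are compatible.

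Two small points deserve attention. First, in (i)$\Rightarrow$(ii) your chain begins with ``$\tau\le\tau_1$'', which is false in general: a topology compatible with $\tau$ may be strictly coarser than, or incomparable with, $\tau$. Fortunately you never use it; the inclusion $\widehat{(G,\tau_1)}\subseteq\widehat{(G,\tau_1\vee\tau_2)}\subseteq\widehat{(G,\mu)}$ together with $\widehat{(G,\tau_1)}=\widehat{(G,\mu)}=\widehat{(G,\tau)}$ is all that is needed. Second, the finite-factorization step does not work with an arbitrary ``fixed neighbourhood $V$ of $1$'': knowing $\chi\bigl(\bigcap_{i\in F}U_i\bigr)\subseteq V$ for one generic $V$ only controls $\chi$ at that scale, and a priori the finite set $F$ could grow as $V$ shrinks. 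You must take the distinguished neighbourhood $V=\Ss_+$ and invoke the standard fact that a homomorphism $\chi\colon G\to\Ss$ which maps some neighbourhood $U$ of $0$ into $\Ss_+$ is automatically continuous (choose $W$ with $(n)W\subseteq U$; then $\chi(W)\subseteq\Ss_n$). With that fact the single finite $F$ serves for all neighbourhoods of $1$, and the rest of your induction goes through verbatim.
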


The following theorem plays a crucial role in the proof Theorem \ref{t:Mackey-space-A(X)}.

\begin{theorem} \label{t:Mackey-space-A(X)-1}
Let $X$ be a zero-dimensional space. Assume that there exists a family $\U=\{ U_i: i\in\kappa\}$ of pairwise disjoint closed-and-open subsets of $X$ and a non-isolated point $x_\infty\in X$ such that $\bigcup\U = X\setminus\{ x_\infty\}$. Then $A(X)$ is neither a pre-Mackey group nor a quasi-Mackey group.
\end{theorem}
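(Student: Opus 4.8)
The plan is to verify the failure of condition~(ii) of Theorem~\ref{t:Char-Mackey}: I will produce two locally quasi-convex group topologies $\tau_1$ and $\tau_2$ on the group $A(X)=\ZZ(X)$, both compatible with the free group topology $\tau$, both \emph{strictly} finer than $\tau$, such that $\tau_1\vee\tau_2$ is \emph{not} compatible with $\tau$. By Theorem~\ref{t:Char-Mackey} this shows $A(X)$ is not pre-Mackey, and the existence of a compatible locally quasi-convex topology strictly finer than $\tau$ (for instance $\tau_1$) shows that $\tau$ is not maximal among compatible locally quasi-convex topologies, i.e.\ $A(X)$ is not quasi-Mackey. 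Throughout, $\widehat{A(X)}$ is identified with $C(X,\Ss)$ via Fact~\ref{p:dual-A(X)}; for $g=a_1x_1+\dots+a_nx_n$ and an index $i$ write $s_i(g):=\sum_{x_j\in U_i}a_j$ for the \emph{block sum} of $g$ over $U_i$, and note $\sum_i s_i(g)=\sum_{x_j\ne x_\infty}a_j$ since $\bigcup\U=X\setminus\{x_\infty\}$.

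Two consequences of the hypothesis drive the construction. First, because each $U_i$ is clopen, for every $z\in\Ss$ the function equal to $z$ on $U_i$ and $1$ off $U_i$ is continuous, so all the \emph{block-sum characters} $g\mapsto z^{s_i(g)}$ belong to $C(X,\Ss)=\widehat{A(X)}$; moreover a function $X\to\Ss$ is locally constant on each $U_i$, hence can be discontinuous only at $x_\infty$. Second, since $x_\infty$ is non-isolated while the $U_i$ are pairwise disjoint clopen sets, no clopen neighbourhood of $x_\infty$ meets only finitely many of them (otherwise, removing those finitely many would exhibit $\{x_\infty\}$ as clopen); hence $\kappa$ is infinite and every $\tau$-neighbourhood of $0$ in $A(X)$ contains elements $x-x_\infty$ with $x$ in arbitrarily ``deep'' blocks. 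Fix an element $z_0\in\Ss$ of infinite order. The character that $\tau_1\vee\tau_2$ should make continuous is a discontinuous character $\chi^*$ of $A(X)$ concentrated at $x_\infty$ and governed by $z_0$ --- of the rough shape $\chi^*(g)=z_0^{\,\sum_i s_i(g)}$, i.e.\ the one induced by the function equal to $z_0$ on $X\setminus\{x_\infty\}$ and $1$ at $x_\infty$, which is discontinuous precisely because $x_\infty$ is non-isolated, so $\chi^*\notin C(X,\Ss)$.

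Now to the topologies themselves. A compatible locally quasi-convex topology on $A(X)$ refining $\tau$ cannot be obtained by declaring a \emph{discontinuous} character small, so it must be the topology of uniform convergence on a family of quasi-convex, weakly compact subsets of $\widehat{A(X)}=C(X,\Ss)$. Accordingly $\tau_j$ is taken to be uniform convergence on a family $\mathcal S_j$ whose members are assembled from the block-sum characters, using powers of $z_0$ and the shrinking arcs $\overline{\Ss_k}$, with the block data split between $\mathcal S_1$ and $\mathcal S_2$ into two complementary parts that together involve \emph{all} powers of $z_0$. Three checks remain. \emph{(1) Each $\tau_j$ strictly refines $\tau$}: a small $\tau$-neighbourhood contains some $x-x_\infty$ with a block sum equal to $1$, which the $\mathcal S_j$-constraints forbid once the arc is small. \emph{(2) Each $\tau_j$ is compatible with $\tau$}: a $\tau_j$-continuous character is trivial on the subgroup $\{g:s_i(g)=0\ \text{for all }i\}$ (this subgroup lies in every basic neighbourhood), hence factors through the block sums and has the form $g\mapsto\prod_i w_i^{\,s_i(g)}$; the $\mathcal S_j$-constraints then force, by Lemma~\ref{l:lemma-F0-Mackey} applied to $z_0$ and to its powers (still of infinite order), each $w_i$ to be a power of $z_0$, and an integrality argument forces all but finitely many $w_i$ to equal $1$, so the character is a finite product of block-sum characters, hence in $C(X,\Ss)$. \emph{(3) $\tau_1\vee\tau_2$ is not compatible}: since $\mathcal S_1\cup\mathcal S_2$ involves all powers of $z_0$, a basic $\tau_1\vee\tau_2$-neighbourhood --- for an appropriately chosen underlying $\tau$-neighbourhood controlling the behaviour near $x_\infty$ --- is small enough that $\chi^*$ carries it into a small arc around $1$, so $\chi^*\in\widehat{(A(X),\tau_1\vee\tau_2)}\setminus C(X,\Ss)$. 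Theorem~\ref{t:Char-Mackey} then finishes the proof.

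The crux, and the step I expect to be genuinely delicate, is reconciling (2) and (3): $\mathcal S_1$ and $\mathcal S_2$ must be rich enough that each $\tau_j$ properly refines $\tau$ and that their combination squeezes a whole neighbourhood of $0$ into the set where $\chi^*$ is near $1$, yet tame enough that neither alone produces a character outside $C(X,\Ss)$. The arithmetic that rules out such a stray character is exactly Lemma~\ref{l:lemma-F0-Mackey} (a character killing the right multiples of a block generator must be a power of $z_0$, and then, being integer-valued, trivial on all but finitely many blocks), and the geometry that makes the ``squeeze'' in (3) possible is exactly the structure of the family $\U$ near $x_\infty$ supplied by the hypothesis: $x_\infty$ stays in the closure of infinitely many blocks no matter how one shrinks. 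In particular one must understand precisely how a basic neighbourhood of $A(X)$ behaves over $\bigcup\U\cup\{x_\infty\}$, so that the combined contribution of the many blocks clustering at $x_\infty$ to the exponent $\sum_i s_i(g)$ cannot accumulate while $g$ remains inside the chosen neighbourhood.
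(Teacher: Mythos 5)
Your high-level strategy coincides with the paper's (violate condition (ii) of Theorem~\ref{t:Char-Mackey} with two compatible topologies whose join is not compatible, and use a compatible topology strictly finer than $\tau$ to kill quasi-Mackeyness), but the actual mathematical content of the theorem is missing from your write-up: the families $\mathcal{S}_1,\mathcal{S}_2$ are never specified, and you yourself flag the reconciliation of your checks (2) and (3) as ``genuinely delicate'' and leave it open. That reconciliation \emph{is} the theorem. Worse, the sketch as stated is internally inconsistent. In (2) you argue that a $\tau_j$-continuous character is trivial on the subgroup $N=\{g:\ s_i(g)=0\ \text{for all }i\}$ because ``this subgroup lies in every basic neighbourhood''; but if every $\tau_j$-neighbourhood of $0$ contains the nonzero subgroup $N$ (it contains $x-y$ for distinct $x,y$ in one $U_i$), then $\tau_j$ cannot be finer than the Hausdorff topology $\tau$, contradicting your check (1) and the quasi-Mackey part of your plan. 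If instead each $\tau_j$ is the join of $\tau$ with your $\mathcal{S}_j$-topology (as the ``underlying $\tau$-neighbourhood'' in (3) suggests), then $\tau_j$-continuous characters do not factor through the block sums, and compatibility must be proved by showing directly that $F|_X$ is continuous at $x_\infty$ for every $\tau_j$-continuous $F$. That is exactly the hard step of the paper (the compactness subclaim producing a cluster value $w$, Lemma~\ref{l:lemma-F0-Mackey}, and the element $\eta=x_\infty+l_0(x_1-x_\infty)+\cdots+l_0(x_t-x_\infty)$ built from points in $t$ distinct blocks clustering at $x_\infty$), and nothing in your proposal replaces it.

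There is also a structural problem with your choice of target character $\chi^*(g)=z_0^{\sum_i s_i(g)}$ with $z_0$ of infinite order. To make $\chi^*$ continuous for $\tau_1\vee\tau_2$ you need a join-neighbourhood on which all block sums are forced to vanish (or at least on which $\chi^*$ lands in arbitrarily small arcs); since a basic join-neighbourhood only controls finitely many members of $\mathcal{S}_1\cup\mathcal{S}_2$, this forces single members of some $\mathcal{S}_j$ to contain infinitely many powers of the block-sum characters. But then already one $\tau_j$-neighbourhood forces, say, $z_0^{2s_i(g)}=1$ for all $i$, hence $s_i(g)=0$, and the discontinuous finite-order character $g\mapsto(-1)^{\sum_i s_i(g)}$ becomes trivial on that neighbourhood, so $\tau_j$ itself is not compatible --- precisely the tension you acknowledge but do not resolve. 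The paper sidesteps this by capturing a \emph{finite-order} character: it takes $\TTT_z$ and $\TTT_{az}$ (pullbacks along $\chi\mapsto(\chi,(z^{\chi(g_i)}))$ with $z$ of infinite order and $a$ of finite order $r$), shows each is compatible by the argument at $x_\infty$ described above, and observes that a basic neighbourhood of $\TTT_z\vee\TTT_{az}$ forces $a^{\chi(g_i)}\in V\cdot V\cap\langle a\rangle=\{1\}$, i.e.\ only divisibility of the block sums by $r$, so that the discontinuous character $F_0(\chi)=a^{\sum_i\chi(g_i)}$ is trivial on it. Without this (or some equally explicit) mechanism, your proposal does not constitute a proof.
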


\begin{proof}
For every $i\in\kappa$, let $g_i$ be the characteristic function of $U_i$. Clearly, the families $\U$ and $\mathcal{G}=\{ g_i: i\in\kappa\}$ and the point $x_\infty$ satisfy the following conditions
\begin{enumerate}
\item[{\rm (i)}] $\supp(g_i) \subseteq U_i $ for every $i\in\kappa$;
\item[{\rm (ii)}] $U_i\cap U_j=\emptyset $ for all distinct $i,j\in\kappa$;
\item[{\rm (iii)}] $x_\infty\not\in U_i$ for every $i\in\kappa$ and $x_\infty\in \cl\big(\bigcup_{i\in\kappa} \{ x\in X: g_i(x)= 1\}\big)$;
\item[{\rm (iv)}]  $\mathcal{G}$ is equicontinuous at each point $y\in X\setminus\{ x_\infty\}$, i.e., for every $\e>0$ there is a neighborhood $\mathcal{O}$ of $y$ (if $y\in U_i$, take $\mathcal{O}=U_i$)  such that $|g_i(x)-g_i(y)|<\e$ for every $x\in \mathcal{O}$ and all $i\in\kappa$;
\item[{\rm (v)}] $\bigcup\U = X\setminus\{ x_\infty\}$.
\end{enumerate}

Now we construct a family
\[
\{ \TTT_z : z\in\Ss \mbox{ has infinite order}\}
\]
of  locally quasi-convex group topologies on $\ZZ(X)$ compatible with the topology $\tau$ of $A(X)$ (recall that $\ZZ(X)$ is the free abelian group generated by $X$, so $\ZZ(X)$ is the underlying abelian group of $A(X)$). To this end, we use an idea similar to the idea described in the proof of Theorem 1.3 of \cite{Gabr-A(s)-Mackey} by modulo Proposition 2.4 of \cite{Gabr-L(X)-Mackey}.

Let $z\in\Ss$ be of infinite order. Define the following algebraic monomorphism $T_z:\ZZ(X)\to A(X)\times \Ff^X(\Ss)$ by
\begin{equation} \label{equ:Mackey-Free-A(X)-1}
T_z\big( \chi \big):= \bigg( \chi, \Big( z^{\chi(g_i)}\Big)\bigg), \quad \forall \; \chi\in \ZZ(X).
\end{equation}
Denote by $\TTT_z$ the topology on $\ZZ(X)$ which is the inverse image under the mapping $T_z$ of the product topology of $A(X)\times \Ff^X(\Ss)$. It is a locally quasi-convex group topology.

\medskip
{\em Claim 1. The topology $\TTT_z$ is compatible with $\tau$.}

\smallskip
Indeed, set $G:= \big(\ZZ(X), \TTT_z\big)$. Since $\ZZ(X)$ is algebraically generated by $X$, each $F\in \widehat{G}$ is uniquely defined by its values on $X$. Therefore, by Fact \ref{p:dual-A(X)}, to show that  $\TTT_z$ is compatible with $\tau$ it is sufficient to prove that, for every $F\in \widehat{G}$, the restriction $F|_X$ of $F$ onto $X$ belongs to $C(X,\Ss)$. Fix arbitrarily $F\in \widehat{G}$.

\smallskip
{\em Claim 1.1. $F|_X$ is continuous at every point $y\in X\setminus \{ x_\infty\}$.}

\smallskip

Fix arbitrarily $n\in\NN$. Since $F$ is $\TTT_z$-continuous,  there exists a standard neighborhood $W=T_z^{-1}\big(U\times V^X\big)$ of zero in $G$, where $U$ is a neighborhood of zero in $A(X)$ and $V$ is a neighborhood of $1$ in $\Ss$, such that $F(W)\subseteq \Ss_n$. By (iv), choose a neighborhood $\mathcal{O}_n$ of $y$ in $X$ such that $x-y \in U$ and $z^{g_i(x)-g_i(y)}\in V$ for every $x\in \mathcal{O}_n$. Then, for every $x\in \mathcal{O}_n$, we obtain
\[
T_z (x-y)=\bigg( x-y, \Big( z^{(x-y)(g_i)}\Big)\bigg)= \bigg( x-y, \Big( z^{g_i(x)-g_i(y)}\Big)\bigg)\in U\times V^X.
\]
Therefore $x-y\in W$ and hence $F(x)-F(y) \in \Ss_n$ for every $x\in \mathcal{O}_n$. Thus $F|_X$ is continuous at $y$.

\smallskip
{\em Claim 1.2. $F|_X$ is continuous at  $x_\infty$.}

\smallskip
Suppose for a contradiction that $F|_X$ is not continuous at  $x_\infty$. Then there is a $p\in\NN$ such that for every neighborhood $\mathcal{O}$ of $x_\infty$ there exists a point $x_\mathcal{O} \in \mathcal{O}$ such that
\begin{equation} \label{equ:Mackey-Free-A(X)-2}
F(x_\mathcal{O})\not\in F(x_\infty)\cdot \Ss_p.
\end{equation}

{\em Subclaim 1.2. There is a point $w \in K:=\Ss\setminus ( F(x_\infty)\cdot \Ss_p )$ such that for every neighborhood $\mathcal{O}$ of $x_\infty$ and every $n\in\NN$ there is a point $x_{\mathcal{O},n}\in \mathcal{O}$ such that $F(x_{\mathcal{O},n})\in w\cdot\Ss_n$.}  Indeed, suppose for a contradiction that for every point $w \in K$ there is a neighborhood $\mathcal{O}_w$ of $x_\infty$ and an $n_w \in\NN$ such that $F(x)\not\in w\cdot\Ss_{n_w}$ for every $x\in \mathcal{O}_w$. Since $K$ is compact, there are $w_1,\dots, w_b\in K$ and corresponding neighborhoods $\mathcal{O}_{w_1},\dots,\mathcal{O}_{w_b}$ of $x_\infty$ and natural numbers $n_{w_1}, \dots, n_{w_b}$ such that
\begin{enumerate}
\item[($\dagger$)] the sets $w_i \cdot \Ss_{n_{w_i}}$ cover $K$, and
\item[($\ddagger$)] $F(x) \not\in w_i \cdot \Ss_{n_{w_i}}$ for each $x\in \mathcal{O}_{w_i}$ and  every $i=1,\dots,b$.
\end{enumerate}
Set $\mathcal{O}:=\bigcap_{i=1}^b \mathcal{O}_{w_i}$. Then, for every $x\in \mathcal{O}$, ($\dagger$) and ($\ddagger$) imply that $F(x) \in \Ss\setminus K=F(x_\infty)\cdot \Ss_p$. But this contradicts (\ref{equ:Mackey-Free-A(X)-2}). The subclaim is proved.

\smallskip
Now fix arbitrarily a point $w\in K$ satisfying the condition of Subclaim 1.2. Since $F$ is $\TTT_z$-continuous,  there exists a standard neighborhood $W=T_z^{-1}\big(U\times V^X\big)$ of zero in $G$, where $U$ is a neighborhood of zero in $A(X)$ and $V$ is an open neighborhood of $1$ in $\Ss$, such that $F(W)\subseteq \Ss_+$.  Observe that, by (\ref{equ:Mackey-Free-A(X)-1}),  $\chi\in W$ if and only if
\begin{equation} \label{equ:Mackey-Free-A(X)-3}
\chi\in U \; \mbox{ and } \; z^{\chi (g_i)} \in V \mbox{ for every } i\in \kappa.
\end{equation}
Set $w_0:=w\cdot F(x_\infty)^{-1}$, so $w_0\not\in \Ss_p$ and hence $w_0\not= 1$. We assume additionally that $w_0\not\in V$. Set $L:=\{ l\in\NN: z^l\in V\}$.  Since $\langle z\rangle$ is dense in $\Ss$, the set $L$ is not empty. We distinguish between two cases.

\medskip
{\em Subcase 1.2.A. Assume that  $w_0^l =1$ for every $l\in L$.} Then Lemma \ref{l:lemma-F0-Mackey} implies $w_0=1$. Since $w_0\not=1 $ we obtain that this case is impossible.

\medskip
{\em Subcase 1.2.B. There is an $l_0\in L$ such that $w_0^{l_0}\not= 1$.} Then there exists a $t\in\NN$ such that $w_0^{l_0 t}\not\in \Ss_+$.
Choose a closed-and-open neighborhood $\mathcal{O}_0$ of $x_\infty$ in $X$ such that
\begin{equation} \label{equ:Mackey-Free-A(X)-4}
(l_0 t)(\mathcal{O}_0 - \mathcal{O}_0) \subseteq U.
\end{equation}
Take $M\in\NN$ such that
\begin{equation} \label{equ:Mackey-Free-A(X)-31}
(w_0\cdot \Ss_{M})^{l_0 t} \cap \Ss_+ =\emptyset.
\end{equation}

Now Subclaim 1.2 implies that there is a point $x_1\in\mathcal{O}_0$ such that $F(x_1)\in w \cdot \Ss_M$. By  (ii) and (v), choose a unique index $i_1\in\kappa$ such that $x_1 \in U_{i_1} $. For a closed-and-open neighborhood $\mathcal{O}_1 := \mathcal{O}_0 \setminus U_{i_1}$ of $x_\infty$ (see (iii)), by Subclaim 1.2, choose a point $x_2\in\mathcal{O}_1$ such that $F(x_2)\in w \cdot \Ss_M$. By  (ii) and (v), choose a unique index $i_2\in\kappa$ such that $x_2 \in U_{i_2} $. Clearly, $i_2 \not= i_1$. Continuing this process we can find points $x_1,\dots,x_t\in \mathcal{O}_0$ such that $x_\alpha\in U_{i_\alpha}=\{ x\in X: g_{i_\alpha}(x)=1\}$ for some pairwise distinct indices $i_1,\dots,i_t \in\kappa$ and
\begin{equation} \label{equ:Mackey-Free-A(X)-32}
F(x_{\alpha})\in w\cdot\Ss_M, \quad \mbox{ for every } \; \alpha=1,\dots,t.
\end{equation}

Set
\[
\eta:=x_\infty + l_0 (x_1 -x_\infty) +\cdots + l_0 (x_t -x_\infty) \in A(X).
\]
Then, by (\ref{equ:Mackey-Free-A(X)-4}), $\eta -x_\infty \in U$. By the choice of $x_1,\dots,x_t$ and (i) and (ii),  for every $j\in \kappa$, we have
\begin{equation} \label{equ:Mackey-Free-A(X)-5}
 g_j(x_\alpha) = 1 \mbox{ if } i_\alpha =j, \mbox{ and } g_j(x_\alpha) = 0 \mbox{ if } i_\alpha \not=j, \quad (\alpha=1,\dots,t)
\end{equation}
and therefore we obtain
\[
\begin{aligned}
z^{(\eta -x_\infty)(g_j)} & =z^{\big(l_0 (x_1 -x_\infty) +\cdots + l_0 (x_t -x_\infty)\big) (g_j)} \stackrel{(iii)}{=} z^{l_0\cdot g_j(x_1)  +\cdots + l_0\cdot g_j(x_t) } \\
& \stackrel{(\ref{equ:Mackey-Free-A(X)-5})}{=} \left\{ \begin{aligned}
            1, & \mbox{ if } j\not\in \{ i_1,\dots,i_t\} \\
            z^{l_0 \cdot g_j(x_\alpha) }=z^{l_0},  & \mbox{ if } j =i_\alpha \mbox{ for some } 1\leq\alpha\leq t.
            \end{aligned} \right.
\end{aligned}
\]
Since $z^{l_0} \in V$ we obtain that $z^{(\eta -x_\infty)(g_j)}$ belongs to $V$ for every index $j\in \kappa$.  Thus $\eta -x_\infty \in W$ and hence $F(\eta -x_\infty) \in \Ss_+$. On the other hand,
\[
\begin{aligned}
F(\eta -x_\infty) & =\prod_{\alpha=1}^t F(x_\alpha -x_\infty)^{l_0} =\prod_{\alpha=1}^t \left( F(x_\alpha)\cdot F(x_\infty)^{-1}\right)^{l_0} \quad \big( \mbox{by } (\ref{equ:Mackey-Free-A(X)-32})\big)\\
& \in \prod_{\alpha=1}^t \left(  w \cdot \Ss_M \cdot F(x_\infty)^{-1}\right)^{l_0} =(w_0\cdot \Ss_{M})^{l_0 t}  .
\end{aligned}
\]
Therefore, by (\ref{equ:Mackey-Free-A(X)-31}), $F(\eta -x_\infty) \not\in \Ss_+$. This contradiction shows that $F$ must be continuous also at $x_\infty$.

Now Claims 1.1 and 1.2 imply that $F\in \widehat{A(X)}$. Thus the topology $\TTT_z$ is compatible with the topology $\tau$ of $A(X)$.

\medskip
{\em Claim 2. For every element $a\in\Ss\setminus\{ 1\}$ of finite order, the topology $\TTT_z \vee \TTT_{az}$ is not compatible with $\tau$.} Indeed, let $r$ be the order of $a$ and set
\[
D_r := \left\{ \chi \in \ZZ(X) : \chi(g_i) \in r\ZZ \quad \forall i\in\kappa \right\}.
\]
Consider standard neighborhoods of zero
\[
W_z=T_z^{-1}\big(U\times V^X\big)  \;\; \mbox{ and } \;\; W_{az}=T_{az}^{-1}\big(U\times V^X\big)
\]
in $\TTT_z$ and $\TTT_{az}$, respectively, where $U\in\tau$ and $V$ is a symmetric open neighborhood  of $1$ in $\Ss$ such that $V\cdot V\cap \langle a\rangle =\{ 1\}$. Then, by (\ref{equ:Mackey-Free-A(X)-3}), we have
\[
W_z \cap W_{az} = \left\{ \chi\in \ZZ(X) : \chi\in U \mbox{ and } z^{\chi(g_i)}, (az)^{\chi(g_i)} \in V \mbox{ for every } i\in\kappa\right\}.
\]
We show that $W_z \cap W_{az} \subseteq D_r$. Indeed, if $\chi\in W_z \cap W_{az}$, then $a^{\chi(g_i)}\in V\cdot V$, and hence $a^{\chi(g_i)}=1$ for every $i\in\kappa$. Therefore, for every $i\in\kappa$, there is an $s_i\in\ZZ$ such that $\chi(g_i) = s_i \cdot r$. Thus $W_z \cap W_{az} \subseteq D_r$.

By (ii), for every $\chi\in \ZZ(X)$, the sum $\sum_i \chi(g_i)$ is finite. Therefore the map
\[
F_0(\chi):= a^{\sum_i \chi(g_i)},\quad \chi\in \ZZ(X),
\]
is a well-defined algebraic homomorphism from $\ZZ(X)$ into $\Ss$. As $F_0(x)= a^{\sum_i g_i(x)}$ for every $x\in X$, (i)-(iii) imply that for every neighborhood $\mathcal{O}$ of $x_\infty$ in $X$ there is a point $x_\mathcal{O}\in \mathcal{O}$ such that  $\sum_i g_i(x_\mathcal{O})=1$. Therefore $F_0(x_\mathcal{O})=a \not= 1=F_0(x_\infty)$, and hence $F_0|_X$ is not continuous at $x_\infty$. Hence $F_0\not\in \widehat{A(X)}$ by Fact \ref{p:dual-A(X)}. On the other hand,
\[
F_0(W_z \cap W_{az} )\subseteq F_0(D_r) =\{ 1\}.
\]
As $W_z \cap W_{az}\in \TTT_z \vee \TTT_{az}$ it follows that $F_0$ is $\TTT_z \vee \TTT_{az}$-continuous. Thus $\TTT_z \vee \TTT_{az}$ is not compatible with $\tau$.

\medskip
{\em Claim 3. $\tau <\TTT_z$, so $\tau$ is not quasi-Mackey.} By (\ref{equ:Mackey-Free-A(X)-1}), it is clear that $\tau \leq\TTT_z$. To show that $\tau\not= \TTT_z$, suppose for a contradiction that $\TTT_z =\tau$. Then, by Claim 1, $\TTT_z \vee \TTT_{az}=\tau \vee \TTT_{az}= \TTT_{az}$ is compatible with $\tau$. But this contradicts Claim 2.

\medskip
{\em Claim 4. The group $A(X)$ is not pre-Mackey}. This immediately follows from Claim 2  and Theorem \ref{t:Char-Mackey}. The theorem is proved.
\end{proof}

Now we are ready to prove Theorem \ref{t:Mackey-space-A(X)}.

\begin{proof}[Proof of Theorem \ref{t:Mackey-space-A(X)}]
Take arbitrarily a non-isolated point $x_\infty\in X$ and let $\{ V_n: n\in\NN\}$ be a strictly decreasing base at $x_\infty$ consisting of closed-and-open sets in which $V_1 =X$. It is clear that the family  $\{ V_n\setminus V_{n+1}: n\in\NN\}$ and the point $x_\infty$ satisfy conditions of Theorem  \ref{t:Mackey-space-A(X)-1}. Thus $A(X)$ is neither a pre-Mackey group nor a quasi-Mackey group.
\end{proof}

\begin{remark} {\em
Note that Theorem  \ref{t:Mackey-space-A(X)-1} can be applied also for non-metrizable spaces. Indeed, let $X$ be an arbitrary uncountable set and $x_\infty$ be a point of $X$. Consider the following topology on $X$: every $x\in X\setminus \{ x_\infty\}$ is isolated and a neighborhood base at $x_\infty$ consists of the sets of the form $(X\setminus A)\cup \{ x_\infty\}$, where $A$ is a countable subset of $X$. Then $X$ is a Lindel\"{o}f zero-dimensional non-metrizable space. It is clear that the family $\big\{ \{ x\}: x \in X\setminus \{ x_\infty\} \big\}$ and the point $x_\infty$ satisfy the conditions of Theorem  \ref{t:Mackey-space-A(X)-1}. Thus $A(X)$ does not have a Mackey group topology.}
\end{remark}

We finish with the following question.

\begin{question}
Does there exist a non-discrete (metrizable or compact) Tychonoff space $X$ for which the group $A(X)$ admits a Mackey group topology?
\end{question}


\end{document}